\numberwithin{equation}{section}
\newtheorem{thm}{Theorem}
\newtheorem*{cor-nn}{Corollary}
\theoremstyle{definition}
\theoremstyle{remark}
\newtheorem*{ack}{Acknowledgement}
\newcommand{\Q}{\mathbb{Q}}
\newcommand{\R}{\mathbb{R}}
\newcommand{\Z}{\mathbb{Z}}
\begin{document}
\title[Wirtinger presentations and hyperbolic groups]{
Groups admitting Wirtinger presentations \\ and Gromov hyperbolic groups}
\author[T. Akita]{Toshiyuki Akita}
\address{Department of Mathematics, Faculty of Science, Hokkaido University,
Sapporo, 060-0810 Japan}
\email{akita@math.sci.hokudai.ac.jp}
\keywords{Wirtinger presentation, hyperbolic group, group homology}
\subjclass[2020]{Primary~20F05,20F67,20J06; Secondary~20F65}

\begin{abstract}
Twisted Wirtinger presentations are generalizations of the classical Wirtinger presentations of knot and link groups.
In this paper, we prove that if a finitely generated group admitting a twisted Wirtinger presentation is Gromov hyperbolic, then its second rational homology group vanishes. Moreover, if the group is torsion-free, then its second integral homology group also vanishes.
\end{abstract}

\maketitle
\section{Introduction}
We say that a group presentation $\langle X\mid R\rangle$ is a 
\emph{twisted Wirtinger presentation} if
each relation in $R$ is of the form 
\[
w^{-1}xw=y^\epsilon\quad (x,y\in X, w\in F(X),\epsilon\in\{\pm 1\}),
\]
where $F(X)$ is the free group on $X$.
In particular, if all the relations in $R$ are of the form
\[
w^{-1}xw=y\quad (x,y\in X, w\in F(X)),
\]
then we say that $\langle X\mid R\rangle$ is a \emph{Wirtinger presentation}.
Both Wirtinger and twisted Wirtinger presentations generalize 
the classical Wirtinger presentations of knot and link groups.
Groups admitting Wirtinger presentations are known by various names.
Among others, they are called {Wirtinger groups} in \cite{Yajima},
{labelled oriented graph groups}
({LOG groups} in short) in \cite{Howie},
and {C-groups} 
in \cites{Kulikov,MR1307063}.
Groups admitting twisted Wirtinger presentations are referred to as non-orientable C-groups in \cite{MR1392843}, where a characterization of these groups in terms of group homology was established.

Examples of groups that admit Wirtinger presentations include free and free abelian groups,
braid groups and pure braid groups, 
Artin groups, Thompson's group $F$, and higher-dimensional knot groups.
As for twisted Wirtinger presentations, examples include 
the infinite dihedral group, elementary abelian $2$-groups,
Coxeter groups, twisted Artin groups, and cactus groups.

More generally, let $M$ be a closed smooth $n$-dimensional manifold smoothly embedded
in $\R^{n+2}$. Then the fundamental group of the complement $\R^{n+2}\setminus M$
admits a twisted Wirtinger presentation, and
it admits a Wirtinger presentation if $M$ is orientable \cite{MR1841759}.

Gromov hyperbolic groups---also known as word hyperbolic or simply hyperbolic groups---were introduced and developed by Gromov~\cite{MR0919829}, and are among the most fundamental objects in geometric group theory.
Roughly speaking, a finitely generated group is Gromov hyperbolic 
if its Cayley graph behaves like a negatively curved space.
For the precise definition and properties of Gromov hyperbolic groups,
see \cites{MR1744486,MR1086648} for instance.

Free groups $F_n$ are Gromov hyperbolic, whereas free abelian groups $\Z^n$ $(n\geq 2)$
are not.
Necessary and sufficient conditions for Coxeter groups to be Gromov hyperbolic were obtained
in~\cite{MR2636665} (see also~\cite{MR2360474}).
A large number of Coxeter groups are Gromov hyperbolic, while others are not.
These facts show that among finitely
generated groups admitting twisted Wirtinger presentations, some are Gromov hyperbolic whereas
others are not.

The purpose of this paper is to provide a simple group homological obstruction to Gromov hyperbolicity 
for finitely generated groups that admit twisted Wirtinger presentations,
which is a consequence of the following result:
\begin{thm}\label{thm:algebra}
Let $G$ be a group admitting a twisted Wirtinger presentation.
If $G$ does not contain a subgroup isomorphic to $\Z\times\Z$,
then the second rational homology group $H_2(G;\Q)$  is trivial.
Moreover, if the group is torsion-free,
then the second integral homology group $H_2(G;\Z)$  is also trivial.
\end{thm}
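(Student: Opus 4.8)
The plan is to compare $H_2(G)$ with the homology of the presentation $2$-complex and then to show that every class in $H_2(G)$ is carried by a two-generated abelian subgroup, which the hypothesis forces to be too small to contribute. Fix a twisted Wirtinger presentation $\langle X\mid R\rangle$ of $G$ and let $K$ be its presentation $2$-complex, so $\pi_1(K)\cong G$ and $K$ may be realized as the $2$-skeleton of a $BG$. Since $K$ has no cells above dimension $2$, we have $H_2(K)=\ker(\partial_2\colon C_2\to C_1)$, and the natural map $H_2(K)\to H_2(G)$ is surjective. A relation $w^{-1}xw=y^{\epsilon}$ abelianizes to $x-\epsilon y$, so $\partial_2$ is exactly the signed incidence map of the graph $\G$ whose vertices are the generators $X$ and which has, for each relation, one edge joining $x$ and $y$ and carrying the sign $\epsilon$. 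Thus $H_2(K)=\ker\partial_2$ is the cycle space of the signed graph $\G$, and the first step is to recall from gain-graph theory that this cycle space is generated by the \emph{balanced} cycles, those whose edge signs multiply to $+1$ (a single unbalanced cycle never lies in $\ker\partial_2$); over $\Q$ this is immediate, and over $\Z$ it requires only a little care in choosing a $\Z$-basis of balanced cycles.

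The geometric heart of the argument is to attach to each balanced cycle $C$ a commuting pair. Traversing the edges of $C$ and composing the conjugations $w_i^{-1}x_{i-1}w_i=x_i^{\epsilon_i}$ around the loop produces, with $w:=w_1\cdots w_k$, the consequence $w^{-1}xw=x^{\prod_i\epsilon_i}$; since $C$ is balanced the signs multiply to $+1$, so we obtain $[x,w]=1$ in $G$. This defines a homomorphism $\Z\times\Z\to G$ sending the two generators to $x$ and $w$, hence a map $T^2\to K$, and I claim that the image of the fundamental class $[T^2]$ under $H_2(T^2)\to H_2(K)$ is the cycle class $z_C$. Granting this, the composite $T^2\to K\to BG$ factors through $B\langle x,w\rangle$, so the image $\alpha_C$ of $z_C$ in $H_2(G)$ lies in the image of $H_2(\langle x,w\rangle)\to H_2(G)$. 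Because the classes $z_C$ generate $H_2(K)$ and $H_2(K)\to H_2(G)$ is onto, the classes $\alpha_C$ generate $H_2(G)$, and each factors through the two-generated abelian subgroup $A_C:=\langle x,w\rangle$. Note that only tori, never Klein bottles, arise here, precisely because $\ker\partial_2$ is generated by balanced cycles.

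It remains to invoke the hypothesis. Since $G$ contains no $\Z\times\Z$, the finitely generated abelian group $A_C$ has torsion-free rank at most $1$, so $A_C\cong\Z\oplus T$ or $A_C\cong T$ with $T$ finite; by the Künneth formula $H_2(A_C)$ is then entirely torsion. Hence $\alpha_C$, lying in the image of a torsion group, vanishes in $H_2(G;\Q)$, and as the $\alpha_C$ generate we conclude $H_2(G;\Q)=0$. When $G$ is torsion-free, $A_C$ is a torsion-free abelian group of rank at most $1$, hence infinite cyclic or trivial, so $H_2(A_C)=0$ and $\alpha_C=0$ already in $H_2(G;\Z)$; since the $\alpha_C$ generate, $H_2(G;\Z)=0$.

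I expect the main obstacle to be the claim of the second paragraph: verifying precisely that the combinatorial cycle class $z_C$ is realized by the torus map built from the composite relation $[x,w]=1$, so that it factors through $H_2(A_C)$. Concretely this amounts to exhibiting a van Kampen filling of $[x,w]$ assembled from the relations of $C$, tiling the plane by its lifts, and checking that the resulting $T^2\to K$ carries $[T^2]$ to $z_C$; controlling this passage from the cycle space of $\G$ to honest surface classes in $H_2(G)$, together with the integral generation statement, is the part requiring genuine care.
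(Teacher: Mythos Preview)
Your strategy and the paper's coincide at the top level: both show that every class in $H_2(G)$ is carried by a two-generated abelian subgroup arising from a commuting pair, and then observe that the absence of a $\Z\times\Z$ subgroup forces such a subgroup to have rank at most one, so that its $H_2$ vanishes rationally (and integrally in the torsion-free case). The difference is that the paper does not touch the presentation $2$-complex at all: it simply invokes Kuz\cprime min's theorem, which states directly that in a group with a twisted Wirtinger presentation every $u\in H_2(G)$ equals $g\wedge h$ for some commuting $g,h\in G$. Your first two paragraphs are, in effect, an outline of a proof of Kuz\cprime min's result, and the ``main obstacle'' you flag in the last paragraph---identifying the combinatorial cycle class $z_C$ with the image of $[T^2]$ and securing the integral generation---is precisely the content of that theorem. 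So your route is more self-contained, while the paper's is a two-line deduction from the literature.

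One genuine correction to your sketch: the kernel of the signed incidence map is \emph{not} in general generated by balanced \emph{simple} cycles. Take two generators $x$, with two relations $w_1^{-1}xw_1=x^{-1}$ and $w_2^{-1}xw_2=x^{-1}$; the associated signed graph has a single vertex and two negative loops, no balanced circles at all, yet $\ker\partial_2$ is generated by $e_1-e_2$. What is true (Zaslavsky's frame matroid) is that $\ker\partial_2$ is generated by balanced circles together with ``handcuffs'': pairs of unbalanced circles joined by a (possibly trivial) path. Traversing such a handcuff as a closed walk still has sign product $+1$, and chasing the relations still yields a commutation $[x,w]=1$ (in the example above, $[x,w_1w_2^{-1}]=1$), so your torus construction goes through unchanged. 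With this amendment your argument is sound, though completing the realization and integral-basis steps amounts to redoing Kuz\cprime min's work.
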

It is well-known that a Gromov hyperbolic group does not contain a subgroup 
isomorphic to $\Z\times\Z$
(cf.~\cites{MR1744486,MR1086648}).
The following corollary follows immediately from Theorem \ref{thm:algebra}:
\begin{cor-nn}\label{thm:main}
Let $G$ be a finitely generated group admitting a twisted Wirtinger presentation.
If $G$ is Gromov hyperbolic, then the second rational homology group $H_2(G;\Q)$  is trivial.
Moreover, if $G$ is both Gromov hyperbolic and torsion-free, 
then the second integral homology group $H_2(G;\Z)$  is also trivial.
\end{cor-nn}
We refer to~\cite{MR672956} for the standard facts about group homology.
It is worth noting that 
any finitely generated abelian group $A$
can be realized as the second homology group $H_2(G;\Z)$ of a 
group $G$ admitting a Wirtinger presentation
\cite{MR0635591}.
Hence, a large number of groups admitting twisted Wirtinger presentations
cannot be Gromov hyperbolic.
Conversely, many Gromov hyperbolic groups have positive second Betti number, 
and such groups cannot admit twisted Wirtinger presentations.

\section{Proof}
From now on, we write $H_2(G)$ in place of $H_2(G;\Z)$ for simplicity.
The proof of Theorem \ref{thm:algebra} relies on a result of Kuz\cprime min \cite{MR1392843}
concerning groups 
that admit twisted Wirtinger presentations.
Before quoting his result, we introduce some necessary preliminaries.
Let $G$ be a group and let $g,h\in G$ be commuting elements (i.e. $[g,h]=1$).
Define the integral homology class $g\wedge h\in H_2(G)$ by
\[
g\wedge h\coloneqq [g|h]-[h|g]\in H_2(G),
\]
where the right-hand side represents a normalized $2$-cycle of $G$.
This homology class is sometimes referred to as the \emph{Pontryagin product} of $g,h\in G$.
It is natural with respect to group homomorphisms. 
Namely, for a group homomorphism $f\colon G\to H$ and commuting elements $g,h\in G$, one has
\[
f(g)\wedge f(h)=f_*(g\wedge h)\in H_2(H).
\]
As a consequence of the naturality, 
for any commuting elements 
$g,h\in G$, consider the homomorphism
 $\phi\colon\Z\times\Z\to G$ defined by $\phi(1,0)=g$, $\phi(0,1)=h$. 
 Then the homology class $g\wedge h$ is exactly the image of 
$(1,0)\wedge (0,1)\in H_2(\Z\times\Z)$ under the induced homomorphism 
$\phi_*\colon H_2(\Z\times\Z)\to H_2(G)$, that is
\begin{equation}\label{eq:wedge}\tag{$\star$}
g\wedge h=\phi_*((1,0)\wedge (0,1))\in H_2(G).
\end{equation}
Note that the integral homology class $(1,0)\wedge (0,1)\in H_2(\Z\times\Z)$ 
can be shown to be a generator of
$H_2(\Z\times\Z)\cong\Z$.
For the properties of the Pontryagin product, see \cite{akita-even-Artin}.

We now present a result of Kuz\cprime min \cite{MR1392843}
concerning groups that admit twisted Wirtinger presentations, 
which will be used in the proof of Theorem \ref{thm:algebra}.
He characterized such groups in terms of their second integral homology.
The following theorem is a consequence of his result:

\begin{thm}[Kuz\cprime min \cite{MR1392843}]\label{thm:Kuzmin}
Let $G$ be a group admitting a twisted Wirtinger presentation.
For any homology class $u\in H_2(G)$, there exist commuting elements 
$g_i,h_i\in G$ $(1\leq i\leq m)$ such that
\[
u=\sum_{i=1}^m g_i\wedge h_i\in H_2(G).
\]
\end{thm}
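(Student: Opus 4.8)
The plan is to compute $H_2(G)$ through Hopf's formula and to read a Pontryagin product directly off the shape of a twisted Wirtinger relator. Fix a twisted Wirtinger presentation $G=F/R$ with $F=F(X)$, and write each relator as $r=w^{-1}xwy^{-\epsilon}$. By Hopf's formula \cite{MR672956} one has $H_2(G)\cong (R\cap[F,F])/[F,R]$, and since conjugation by $F$ is trivial on $R/[F,R]$, the abelian group $R/[F,R]$ is generated by the classes of the relators. The abelianization $F\to F^{ab}=\Z^X$ sends $r$ to $x-\epsilon y$, so any $u\in H_2(G)$ is represented by a product $\prod_j r_j^{n_j}$ subject to the balancing condition $\sum_j n_j(x_j-\epsilon_j y_j)=0$ in $\Z^X$. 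Everything below is an attempt to turn this balancing condition into a single commuting pair.

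First I would isolate the basic building block. A relator $r=w^{-1}xwy^{-\epsilon}$ lies in $[F,F]$ precisely when $x-\epsilon y=0$, i.e. when $y=x$ and $\epsilon=1$; in that case $r=[w^{-1},x]$ is an honest commutator, and the relation $w^{-1}xw=x$ says exactly that the images $\bar w,\bar x\in G$ commute. The standard dictionary between Hopf's formula and the bar resolution \cite{MR672956} then identifies the class of such a commutator with a Pontryagin product: applying the naturality relation \eqref{eq:wedge} to the homomorphism $\Z\times\Z\to G$ sending the two generators to $\bar w$ and $\bar x$ shows that $[w^{-1},x]$ represents $\bar w\wedge\bar x\in H_2(G)$ (up to the immaterial sign, since $h\wedge g=-(g\wedge h)$). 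Thus each balanced relator already contributes one Pontryagin product, and the remaining task is the general case, in which $u$ is assembled from several relators whose abelianizations cancel only in combination.

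The main work is to fuse such contributions into one commutator. I would encode the data $(n_j)$ as a $1$-chain on the directed graph with vertex set $X$ and an edge $x_j\to y_j$ for each relator; the balancing condition says this chain is a cycle, hence a combination of closed paths. Along a closed path $z\to\cdots\to z$ the successive conjugators multiply to a word $v$ with $v^{-1}zv=z^{\pm1}$ in $G$, so $\bar v$ (or $\bar v^{\,2}$ in the twisted case $\epsilon=-1$) commutes with $\bar z$, and the associated product of relators represents $\bar v\wedge\bar z$ modulo $[F,R]$. To merge two contributions that share a vertex $z$ I would use the identity $[a_1a_2,c]=a_1[a_2,c]a_1^{-1}\cdot[a_1,c]$ together with the triviality of conjugation on $R/[F,R]$, giving $[a_1,c][a_2,c]\equiv[a_1a_2,c]$; equivalently, for fixed $\bar z$ the assignment $\bar v\mapsto\bar v\wedge\bar z$ is a homomorphism on the centralizer of $\bar z$. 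The hard part will be carrying out this fusion \emph{globally}, so that the whole cycle collapses to a single commuting pair rather than a sum of them: reconciling contributions attached to different vertices is exactly where the twisted Wirtinger structure must be used essentially, and keeping track of the $2$-torsion created by relators with $\epsilon=-1$ is where I expect the argument to demand the most care.
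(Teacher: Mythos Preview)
The paper does not prove this statement: it is quoted from Kuz\cprime min's work and used as a black box in the proof of Theorem~\ref{thm:algebra}. There is therefore no argument in the paper to compare yours against, and what follows is an assessment of your attempt on its own terms.

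Your Hopf-formula setup and the graph--cycle bookkeeping are the right framework, and they do establish that $H_2(G)$ is \emph{generated} by Pontryagin products: each circuit in the relator graph contributes one class $\bar v\wedge\bar z$, and within a connected component of that graph all such contributions can be transported to a common base letter (the generators involved being conjugate in $G$) and then added via $[a_1,z][a_2,z]\equiv[a_1a_2,z]$ modulo $[F,R]$. The step you yourself flag as hard---merging contributions from \emph{different} components into a single commuting pair---is a genuine gap, and in fact it cannot be closed in the stated generality. Take $G=\Z^2\ast\Z^2$ with the Wirtinger presentation $\langle a,b,c,d\mid a^{-1}ba=b,\ c^{-1}dc=d\rangle$: here $H_2(G)\cong\Z\oplus\Z$, yet any two commuting elements of $G$ either generate a cyclic group or lie in a conjugate of a single $\Z^2$ factor, so every Pontryagin product lands in one summand and the class $(1,1)$ is not of the form $g\wedge h$. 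Thus the literal assertion that every class equals a single $g\wedge h$ cannot be reached by your method because it already fails in this example; what your outline does yield---generation of $H_2(G)$ by such classes---is exactly what the proof of Theorem~\ref{thm:algebra} actually uses.
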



\begin{proof}[Proof of Theorem \ref{thm:algebra}]
Let $G$ be a group. Suppose that
\begin{enumerate}
\item $G$ does not contain a subgroup isomorphic to $\Z\times\Z$,
\item $G$ admits a twisted Wirtinger presentation.
\end{enumerate}
Let $\phi\colon\Z\times\Z\to G$ be a group homomorphism, and let
$A\subset G$ be the image of $\phi$.
It follows from the condition (1) that $A$ is an abelian group generated by
two elements and of rank at most one.
For such an abelian group $A$, the second rational homology group $H_2(A;\Q)=H_2(A)\otimes\Q$
is trivial.
This implies that the induced homomorphism $\phi_*\colon H_2(\Z\times\Z;\Q)\to H_2(G;\Q)$ is the zero map,
since $\phi_*$ factors through $H_2(A;\Q)$ as 
\[
H_2(\Z\times\Z;\Q)\to H_2(A;\Q)\to H_2(G;\Q).
\]
In view of the identity \eqref{eq:wedge}, the rational homology class
$g\wedge h\in H_2(G;\Q)$ vanishes for any commuting elements $g,h\in G$.
On the other hand, by Theorem \ref{thm:Kuzmin}, 
every element of $H_2(G;\Q)=H_2(G)\otimes\Q$
is a sum of rational multiples of
elements of the form $g\wedge h\in H_2(G;\Q)$,
where $g,h\in G$ commute with each other.
Thus we conclude that $H_{2}(G;\Q)=0$.
This completes the proof of the first half of the theorem.

We now assume, in addition, that $G$ is torsion-free.
Let $A$ be the image of a homomorphism $\phi\colon\Z\times\Z\to G$ as before.
Then, by the condition (1), the subgroup $A$ must be either trivial 
or infinite cyclic.
For such a group $A$, the second integral homology $H_2(A)$ vanishes.
This implies that the induced homomorphism 
$\phi_{*}\colon H_{2}(\Z\times\Z)\to H_{2}(G)$ is the zero map, since
$\phi_*$ factors through $H_2(A)$ as 
\[
H_2(\Z\times\Z)\to H_2(A)\to H_2(G).
\]
In view of the identity \eqref{eq:wedge}, the integral homology class
$g\wedge h\in H_2(G)$ vanishes for any commuting elements $g,h\in G$.
The second half of the theorem now follows from Theorem \ref{thm:Kuzmin}.
\end{proof}

\begin{ack}
The author was partially supported by JSPS KAKENHI Grant Number 24K06727.
\end{ack}

\begin{bibdiv}
\begin{biblist}

\bib{akita-even-Artin}{article}{
   author={Akita, Toshiyuki},
   title={The second integral homology of even Artin groups},
   date={2025},
status={to appear in Kyushu J. Math.},
  eprint={https://arxiv.org/abs/2507.04577},
}

\bib{MR1744486}{book}{
   author={Bridson, Martin R.},
   author={Haefliger, Andr\'e},
   title={Metric spaces of non-positive curvature},
   series={Grundlehren der mathematischen Wissenschaften},
   volume={319},
   publisher={Springer-Verlag, Berlin},
   date={1999},
   pages={xxii+643},
   isbn={3-540-64324-9},
   review={\MR{1744486}},
   doi={10.1007/978-3-662-12494-9},
}

\bib{MR672956}{book}{
   author={Brown, Kenneth S.},
   title={Cohomology of groups},
   series={Graduate Texts in Mathematics},
   volume={87},
   publisher={Springer-Verlag, New York-Berlin},
   date={1982},
   pages={x+306},
   isbn={0-387-90688-6},
   review={\MR{672956}},
}

\bib{MR2360474}{book}{
   author={Davis, Michael W.},
   title={The geometry and topology of Coxeter groups},
   series={London Mathematical Society Monographs Series},
   volume={32},
   publisher={Princeton University Press, Princeton, NJ},
   date={2008},
   pages={xvi+584},
   isbn={978-0-691-13138-2},
   isbn={0-691-13138-4},
   review={\MR{2360474}},
}

\bib{MR1086648}{collection}{
   title={Sur les groupes hyperboliques d'apr\`es Mikhael Gromov},
   language={French},
   series={Progress in Mathematics},
   volume={83},
   editor={Ghys, \'E.},
   editor={de la Harpe, P.},
   note={Papers from the Swiss Seminar on Hyperbolic Groups held in Bern,
   1988;
   Edited by \'E. Ghys and P. de la Harpe},
   publisher={Birkh\"auser Boston, Inc., Boston, MA},
   date={1990},
   pages={xii+285},
   isbn={0-8176-3508-4},
   review={\MR{1086648}},
   doi={10.1007/978-1-4684-9167-8},
}

\bib{MR0919829}{article}{
   author={Gromov, M.},
   title={Hyperbolic groups},
   conference={
      title={Essays in group theory},
   },
   book={
      series={Math. Sci. Res. Inst. Publ.},
      volume={8},
      publisher={Springer, New York},
   },
   isbn={0-387-96618-8},
   date={1987},
   pages={75--263},
   review={\MR{0919829}},
   doi={10.1007/978-1-4613-9586-7\_3},
}

\bib{Howie}{article}{
   author={Howie, James},
   title={On the asphericity of ribbon disc complements},
   journal={Trans. Amer. Math. Soc.},
   volume={289},
   date={1985},
   number={1},
   pages={281--302},
   issn={0002-9947},
   review={\MR{0779064}},
   doi={10.2307/1999700},
}

\bib{Kulikov}{article}{
   author={Kulikov, V. S.},
   title={Alexander polynomials of plane algebraic curves},
   language={Russian, with Russian summary},
   journal={Izv. Ross. Akad. Nauk Ser. Mat.},
   volume={57},
   date={1993},
   number={1},
   pages={76--101},
   issn={1607-0046},
   translation={
      journal={Russian Acad. Sci. Izv. Math.},
      volume={42},
      date={1994},
      number={1},
      pages={67--89},
      issn={1064-5632},
   },
   review={\MR{1220582}},
   doi={10.1070/IM1994v042n01ABEH001534},
} 

\bib{MR1307063}{article}{
   author={Kulikov, V. S.},
   title={Geometric realization of $C$-groups},
   language={Russian, with Russian summary},
   journal={Izv. Ross. Akad. Nauk Ser. Mat.},
   volume={58},
   date={1994},
   number={4},
   pages={194--203},
   issn={1607-0046},
   translation={
      journal={Russian Acad. Sci. Izv. Math.},
      volume={45},
      date={1995},
      number={1},
      pages={197--206},
      issn={1064-5632},
   },
   review={\MR{1307063}},
   doi={10.1070/IM1995v045n01ABEH001627},
}

\bib{MR1841759}{article}{
   author={Kulikov, V. S.},
   title={Fundamental groups of the complements to submanifolds of a sphere
   of codimension 2},
   language={Russian, with Russian summary},
   journal={Tr. Mat. Inst. Steklova},
   volume={231},
   date={2000},
   pages={284--293},
   issn={0371-9685},
   translation={
      journal={Proc. Steklov Inst. Math.},
      date={2000},
      number={4(231)},
      pages={271--280},
      issn={0081-5438},
   },
   review={\MR{1841759}},
}

\bib{MR1392843}{article}{
   author={Kuz\cprime min, Yu. V.},
   title={Groups of knotted compact surfaces, and central extensions},
   language={Russian, with Russian summary},
   journal={Mat. Sb.},
   volume={187},
   date={1996},
   number={2},
   pages={81--102},
   issn={0368-8666},
   translation={
      journal={Sb. Math.},
      volume={187},
      date={1996},
      number={2},
      pages={237--257},
      issn={1064-5616},
   },
   review={\MR{1392843}},
   doi={10.1070/SM1996v187n02ABEH000110},
}

\bib{MR0635591}{article}{
   author={Litherland, R. A.},
   title={The second homology of the group of a knotted surface},
   journal={Quart. J. Math. Oxford Ser. (2)},
   volume={32},
   date={1981},
   number={128},
   pages={425--434},
   issn={0033-5606},
   review={\MR{0635591}},
   doi={10.1093/qmath/32.4.425},
}

\bib{MR2636665}{book}{
   author={Moussong, Gabor},
   title={Hyperbolic Coxeter groups},
   note={Thesis (Ph.D.)--The Ohio State University},
   date={1988},
   pages={55},
   review={\MR{2636665}},
}

\bib{Yajima}{article}{
   author={Yajima, Takeshi},
   title={Wirtinger presentations of knot groups},
   journal={Proc. Japan Acad.},
   volume={46},
   date={1970},
   number={10},
   pages={suppl. to 46 (1970), no. 9, 997--1000},
   issn={0021-4280},
   review={\MR{0281781}},
}

\end{biblist}
\end{bibdiv}

\end{document}